\newtheorem{theorem}{Theorem}[section]
\newtheorem{lemma}[theorem]{Lemma}
\newtheorem{proposition}[theorem]{Proposition}
\newtheorem{remark}[theorem]{Remark}
\DeclareMathOperator{\image}{\mathrm{Im}}
\title[Simplified numerical finite type invariant]
{Simplified numerical form of universal finite type invariant of Gauss words}
\author{Tomonori Fukunaga}
\address{Department of Mathematics, Hokkaido University, Kita 10, Nishi 8, Kita-Ku, Sapporo, Hokkaido, 060-0810, Japan}
\email{fukunaga@math.sci.hokudai.ac.jp}
\author{Takayuki Yamaguchi}
\address{Department of Mathematics, Hokkaido University, Kita 10, Nishi 8, Kita-Ku, Sapporo, Hokkaido, 060-0810, Japan}
\email{yt@math.sci.hokudai.ac.jp}
\author{Takaaki Yamanoi}
\address{Department of Mathematics, Hokkaido University, Kita 10, Nishi 8, Kita-Ku, Sapporo, Hokkaido, 060-0810, Japan}
\email{yamanoi@math.sci.hokudai.ac.jp}
\date{\today}
\begin{document}

\maketitle

\begin{abstract}
In the present paper, we study the finite type invariants of Gauss words.
In the Polyak algebra techniques,
we reduce the determination of the group structure
to transformation of a matrix into its Smith normal form and
we give the simplified form of a universal finite type invariant
by means of the isomorphism of this transformation.
The advantage of this process is that we can implement it as a computer program.
We obtain the universal finite type invariant of degree $4$, $5$, and $6$ explicitly.
Moreover, as an application,
we give the complete classification of Gauss words of rank $4$ and
the partial classification of Gauss words of rank $5$
where the distinction of only one pair remains.
\end{abstract}

\section{Introduction}
\label{sec:introduction}

One of concerns in knot theory is the classification of knots,
which is mainly studied by invariants of knots.
A finite type invariant is one of the most important classes of knot invariants.
V. A. Vassiliev introduced finite type invariants
to study topology of the space of all knots \cite{Vassiliev1992}.
Finite type invariants are also known as Vassiliev knot invariants.
Later, the definition of finite type invariants
was simplified by J. Birman and X. Lin in \cite{Birman1993}.

A knot is the image of a smooth embedding of $S^{1}$ into $\mathbb{R}^{3}$.
We also express a knot as a knot diagram in $\mathbb{R}^{2}$,
which is a smooth immersion of $S^{1}$ into $\mathbb{R}^{2}$ with
transversal double points such that the two paths at each double point
are assigned to be an over path and an under path respectively.
Such a double point is called a crossing.
Reading off labels of crossings and crossing information starting from a fixed base point,
we can interpret knots as words with additional data.
For such words, considering equivalence relations that are analogy of Reidemeister moves,
which are called homotopy moves,
we can investigate knots combinatorially.

Generalized notions of knots have been introduced.
L. Kauffman introduced the theory of virtual knots
by using combinatorially extended knot diagrams
which are called virtual knot diagrams \cite{Kauffman1999}.
A virtual knot diagram is a planar graph
of valency four endowed with the following structure: each vertex
either has an overcrossing and undercrossing
(in other words, real crossing) or is marked by a virtual crossing.
Then, he define virtual knots as the quotient of
a set of virtual knot diagrams with respect to an equivalence relation generated by
the virtual Reidemeister moves.
V. Turaev extended the theory of virtual knots and virtual links
from the aspect of Gauss codes to nanowords and nanophrases
in his papers \cite{Turaev2006,Turaev2007a}.
Nanowords are generalizations of knots and some other knot theoretical objects.
Gauss words are defined as the simplest version of nanowords and
homotopy classes of Gauss words are equivalent to free knots with a base point
in \cite{Manturov2009a,Manturov2010a}.

In \cite{Goussarov2000}, M. Goussarov, M. Polyak and O. Viro extended
the notion of the finite type invariants to virtual knots
and applied the theory of finite type invariants of classical knots to virtual knots.
They constructed an abelian group, called Polyak algebra,
and the explicit form of universal finite type invariants
that is written as count of subdiagrams for a diagram.
Furthermore, in \cite{Gibson2011}, A. Gibson and N. Ito defined
the finite type invariants of nanophrases in a similar way to Goussarov, Polyak and Viro's,
and they investigated the finite type invariants of nanophrases of lower degree.
They also obtained the finite type invariant of Gauss words of degree $4$.

Gauss words are sequences of letters so that all letters appear exactly twice or not in it.
Because Gauss words are words that have no additional information,
only configuration of letters determines homotopy classes of Gauss words.
V. Turaev conjectured that all Gauss words are homotopic to the empty word \cite{Turaev2007},
however, Gibson \cite{Gibson2011b} and Manturov \cite{Manturov2009a} proved independently
the existence of a Gauss word not homotopic to the empty word.
In \cite{Gibson2011}, the same result was also proven by means of a finite type invariant.
It is known that there exist infinite homotopy classes of such Gauss words.
Because nanowords are Gauss words with additional data,
we regard Gauss words as invariants of knots and of their generalizations.

An argument of Polyak algebra is useful for studies using computer programs.
For virtual knots, computing the group defined from Polyak algebra,
D. Bar-Natan et al.\ conjectured relations between dimensions of spaces of finite type invariants
and dimensions of spaces of weight systems \cite{Bar-Natan2011}.
Note that their computation was carried out when Polyak algebra is defined over some field
while Polyak algebra in our situation is defined over $\mathbb{Z}$.

The universal finite type invariant of Gauss words
can be written as a map counting isomorphic subwords of a word.
In this paper, we present a simplified form of the universal finite type invariant,
which is a map between words and a direct sum of cyclic groups.
To obtain the form of the invariant,
we determine the structure of the abelian group $H_n$,
which is defined by truncating words of rank more than $n$ in Polyak algebra.
We can reduce this determination of the group structure
to transformation of a matrix into its Smith normal form,
which is a popular technique in computer algebra.
Then, we obtain a direct sum of cyclic groups that is isomorphic to $H_n$.
Also, the transformation matrix in this process
gives the isomorphism between the group $H_n$ and the direct sum of cyclic groups.
Composing the isomorphism with the original universal finite type invariant,
we have the simplified form.

Our computer program to carry out this process determined
the group structure for rank less than or equal to $7$.
In addition, we constructed the universal invariants of degree $4$, $5$, and $6$.
As an application of these invariants,
we show the partial classification of Gauss words of rank less than or equal to $5$.

This paper is organized as follows. In Section \ref{sec:definitions-notations},
we review the definitions and notations on Gauss words and finite type invariants.
We recall Ito and Gibson's results in \cite{Gibson2011}.
In Section \ref{sec:determination-group},
we present an algorithm to determine the group structure
and give our computational results for truncated Polyak algebra.
In Section \ref{sec:simplified-form}, we transform the universal finite type invariants
into the simplified numerical form and
present explicitly the finite type invariant of the degree $4$ and $5$.
In section \ref{sec:homotopy-classification},
we apply our main result to the classification of Gauss words of at most $5$ letters.

\section{Gauss words and finite type invariants}
\label{sec:definitions-notations}

In this section, we recall some definitions and facts on Gauss words
and finite type invariants of them.
The definitions and notations in this section are those of
a restricted version of \cite{Gibson2011}.
We can also refer to \cite{Turaev2006,Turaev2007,Turaev2007a}
for basic definitions.

A \emph{word} of length $n$ is a sequence of $n$ letters.
If a word is length $0$ we call it an \emph{empty word}.
In this paper we consider \emph{Gauss words},
which are sequences of letters so that
all letters appear exactly twice or not in it.
The \emph{rank} of a Gauss word is the number of distinct letters appearing in it.
Clearly, the rank of a Gauss word is the half of length of it.
Two Gauss words $w_1$ and $w_2$ are \emph{isomorphic}
if there is a bijection between the sets of letters appearing in $w_1$ and $w_2$
so that a word created by mapping all letters of $w_1$ coincides with $w_2$.

We define \emph{homotopy moves} for Gauss words as the following.
\begin{itemize}
  \item[H1:] $xAAy \leftrightarrow xy$
  \item[H2:] $xAByBAz \leftrightarrow xyz$
  \item[H3:] $xAByACzBCt \leftrightarrow xBAyCAzCBt$
\end{itemize}
Here, $x$, $y$, $z$, and $t$ are arbitrary words that can be empty words.
\emph{Homotopy} is the equivalence relation generated by isomorphisms
and homotopy moves of the three types.

Let $P$ be the set of homotopy classes of Gauss words and
$\mathbb{Z} P$ be the free abelian group generated by the elements of $P$.
We define a \emph{semi-letter} $\dot{A}$ for a Gauss word of the form $xAyAz$,
where $x$, $y$, and $z$ are arbitrary words that can be empty words.
A word including $\dot{A}$ in $\mathbb{Z} P$ is defined by
\begin{align}
  x\dot{A}y\dot{A}z = xAyAz - xyz.
\end{align}
Semi-letters define a class of homotopy invariants as follows.
Let $v$ be a homotopy invariant for Gauss words
taking values in an abelian group.
We extend linearly a homotopy invariant to $\mathbb{Z} P$.
A homotopy invariant $v$ is a \emph{finite type invariant}
if there exists an integer $n$ so that $v(w) = 0$ for any Gauss word $w$
including more than $n$ semi-letters.
We call such a least integer $n$ the \emph{degree} of the finite type invariant $v$.
A finite type invariant $v: \mathbb{Z} P \to G$ of degree $n$ is
a \emph{universal invariant} of degree $n$
if for any finite type invariant $v^\prime$ of degree $n: \mathbb{Z} P \to H$,
there exists a homomorphism $f$ so that the following diagram is commutative.

\begin{align}
\begin{minipage}[c]{3cm}
\xymatrix{
\mathbb{Z} P \ar[r]^-v \ar[rd]_{v^\prime} &
G \ar[d]^{f} \\
&H
}
\end{minipage}
\end{align}

Goussarov et al.\ described a universal invariant of virtual knots
as an angle bracket formula \cite{Goussarov2000}.
Gibson and Ito \cite{Gibson2011} extended the formula to nanophrases.
Let $w_1$ and $w_2$ be words.
The word $w_1$ is a \emph{subword} of the word $w_2$
if we can generate $w_1$ by deleting some letters from $w_2$.
Then we write $w_1 \triangleleft w_2$.
For any word $w$, $w$ itself and the empty word are subwords of $w$.
We define an angle bracket $\langle w_1, w_2 \rangle$
to be the number of subwords of $w_2$ isomorphic to $w_1$.
We extend linearly the angle bracket to $\mathbb{Z} P$.

Let $\mathbb{Z} \mathcal{I}$ be the free abelian group generated
by $\mathbb{Z}$ isomorphism classes of Gauss words.
Then $\mathbb{Z} P$ coincides with $\mathbb{Z} \mathcal{I}$ modulo homotopy moves H1 to H3.
We consider three other relations on $\mathbb{Z} \mathcal{I}$ than the homotopy moves.
\begin{itemize}
  \item[G1:] $xAAy = 0$
  \item[G2:] $xAByBAz + 2xAyAz = 0$
  \item[G3:] $xAByACzBCt + xAByAzBt + xAyACzCt + xByCzBCt = xBAyCAzCBt + xBAyAzBt + xAyCAzCt + xByCzCBt$
\end{itemize}
Here, $x$, $y$, $z$, and $t$ are arbitrary words
so that each term of word in the above relations is a Gauss word.
Let $G$ be the group given by $\mathbb{Z} \mathcal{I}$ modulo these three types of relations,
which is called Polyak algebra.
Note that Polyak algebra has the structure of an algebra defined by concatenation of words,
however, we do not use it in this paper.
To define a group $G_n$,
we introduce another relation G4 for a positive integer $n$:
\begin{itemize}
  \item[G4:] If the rank of a word $w$ is greater than $n$ then $w = 0$.
\end{itemize}
Note that G4 depends on the rank $n$.
Let $G_n$ be the group given by $\mathbb{Z} \mathcal{I}$ modulo the relations G1, G2, G3, and G4.
We can decompose $G_n$ into the direct sum of two groups:
one is the group generated by the empty word
and the other is the group generated by the other generators.
Because the empty word does not appear in the relations G1 to G4, it generates $\mathbb{Z}$.
Therefore, we have
\begin{align}
 G_n = \mathbb{Z} \oplus H_n, \label{eq:gn-decomposition}
\end{align}
where $H_n$ be the group generated by Gauss words except
the empty word whose rank is less than or equal to $n$.

We define a map $\theta: \mathbb{Z} \mathcal{I} \to \mathbb{Z} \mathcal{I}$ by
\begin{align}
  \theta (p) = \sum_{q \triangleleft p} q.
\end{align}
We also define an additive map $O_n: \mathbb{Z} \mathcal{I} \to \mathbb{Z} \mathcal{I}$
by $O_n(p) = p$ for a word $p$ of rank less than or equal to $n$
and $O_n (p) = 0$ for a word $p$ of rank greater than $n$.
Then, the map $\theta$ induces an isomorphism $\hat{\theta}$ from $\mathbb{Z} P$ onto $G$
(see Proposition 5.7 in \cite{Gibson2011}).
Also, the map $O_n$ induces a map from $G$ to $G_n$.
We define the map $\tilde{\Gamma}_n: \mathbb{Z} P \to G_n$
as the composite of $\hat{\theta}$ and $O_n$:
\begin{align}
  \begin{minipage}[c]{5cm}
    \xymatrix{
    \mathbb{Z} \mathcal{I} \ar[r]^{\theta} \ar[d] & \mathbb{Z} \mathcal{I} \ar[r]^{O_n} \ar[d] &
    \mathbb{Z} \mathcal{I} \ar[d] \\
    \mathbb{Z} P \ar[r]^{\hat{\theta}} \ar@(dr,dl)[rr]_{\tilde{\Gamma}_n} & G \ar[r]^{O_n} & G_n .
    }
  \end{minipage}
  \label{diagram_of_situation}
\end{align}
Let $P_n$ be the set of homotopy classes of Gauss words whose rank is $n$ or less.
We also have
\begin{align}
  \tilde{\Gamma}_n (p) = \sum_{q \triangleleft p} O_n(p) = \sum_{q \in P_n} \langle q, p \rangle q.
\end{align}
The following result is crucial.
\begin{proposition}[Gibson and Ito \protect{\cite[Proposition 5.9]{Gibson2011}}]
  The map $\tilde{\Gamma}_n$ is a universal invariant of degree $n$.
\end{proposition}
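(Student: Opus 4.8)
The plan is to verify the two defining properties of a universal invariant of degree $n$ for $\tilde{\Gamma}_n$: that it is itself a finite type invariant of degree $n$, and that it factors every finite type invariant of degree $n$. I would begin by establishing the first property. Since $\tilde{\Gamma}_n = O_n \circ \hat{\theta}$ and $\hat{\theta}$ is already known to be a well-defined homotopy invariant (an isomorphism $\mathbb{Z} P \to G$ by Proposition 5.7 of \cite{Gibson2011}), and $O_n$ descends to $G \to G_n$, the composite $\tilde{\Gamma}_n$ is a well-defined homotopy invariant. To see it has degree at most $n$, I would take any Gauss word $w$ containing more than $n$ semi-letters and compute $\tilde{\Gamma}_n(w) = \sum_{q \triangleleft p}\,O_n(q)$ using the expansion of semi-letters: a word with more than $n$ semi-letters has rank strictly greater than $n$, so after passing through $\theta$ and applying $O_n$ the surviving subwords of rank $\le n$ cancel in pairs because of the inclusion-exclusion structure built into the semi-letter expansion $x\dot A y \dot A z = xAyAz - xyz$. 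The cleanest way to phrase this is to note that $\theta$ is, up to the known identifications, the operation inverse to the one that replaces each letter pair by a semi-letter, so that on a word all of whose letters have been made into semi-letters, $\theta$ produces exactly the alternating sum over subsets of letter-pairs, and truncating at rank $n$ kills everything when there are more than $n$ pairs. That $\tilde{\Gamma}_n$ does \emph{not} vanish on all words with exactly $n$ semi-letters (so that its degree is precisely $n$, not less) follows because $O_n$ is the identity on rank-$n$ words and $\hat\theta$ is injective.

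Next I would establish universality. Let $v' : \mathbb{Z} P \to H$ be any finite type invariant of degree $n$. The goal is a homomorphism $f : G_n \to H$ with $f \circ \tilde{\Gamma}_n = v'$. The natural candidate is obtained by pushing $v'$ through $\hat\theta^{-1}$: since $\hat\theta : \mathbb{Z} P \to G$ is an isomorphism, $v' \circ \hat\theta^{-1}$ is a well-defined homomorphism $G \to H$, and the content of the argument is to show that it descends along $O_n : G \to G_n$, i.e. that $v' \circ \hat\theta^{-1}$ annihilates the kernel of $O_n$. That kernel is generated by the images in $G$ of words of rank greater than $n$. So I must show $v'(\hat\theta^{-1}(p)) = 0$ for every isomorphism class $p$ of rank $> n$. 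Here is where the degree hypothesis on $v'$ enters: $\hat\theta^{-1}$, being the inverse of $\theta$, rewrites the class of a rank-$m$ word $p$ in $\mathbb{Z} P$ precisely as the fully-semi-lettered word $\dot p$ (the word $p$ with every letter-pair converted to a semi-letter pair), up to lower-rank correction terms that are themselves handled by downward induction on rank. Since $\dot p$ has $m > n$ semi-letters, $v'(\dot p) = 0$ by definition of degree $n$; the induction on rank takes care of the correction terms. Hence $v' \circ \hat\theta^{-1}$ factors as $f \circ O_n$ for a unique $f : G_n \to H$, and then $f \circ \tilde{\Gamma}_n = f \circ O_n \circ \hat\theta = v' \circ \hat\theta^{-1} \circ \hat\theta = v'$, as required. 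Uniqueness of $f$ is automatic from surjectivity of $O_n$ (hence of $\tilde\Gamma_n$ onto $G_n$).

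The main obstacle I anticipate is making precise the claim that $\hat\theta^{-1}$ sends the class of a word $p$ to its fully-semi-lettered version $\dot p$ modulo lower-rank terms, and organizing the resulting downward induction on rank cleanly. Concretely, the identity $x\dot A y \dot A z = xAyAz - xyz$ shows that $\theta$ applied to a fully-semi-lettered word of rank $m$ is $\sum_{S} (\text{subword on letter-pairs in } S)$ over all subsets $S$ of the $m$ letter-pairs, i.e. $\theta(\dot p) = \sum_{q \triangleleft p} q$ where the sum is over isomorphism classes of subwords — which is exactly the definition of $\theta(p)$. One then inverts this triangular (with respect to rank) relation by Möbius inversion over the subword poset, which expresses $p = \sum_{q \triangleleft p} (\pm)\, \dot q$ or the analogous statement in $G$; feeding this into $v'$ and using that $v'(\dot q) = 0$ whenever $q$ has rank $> n$ collapses the sum to its rank-$\le n$ part, which is precisely what $O_n$ retains. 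Once this combinatorial bookkeeping is set up, the rest is formal diagram-chasing in \eqref{diagram_of_situation}.
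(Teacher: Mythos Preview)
The paper does not supply its own proof of this proposition: it is quoted verbatim as Proposition~5.9 of Gibson--Ito \cite{Gibson2011} and left uncited. There is therefore nothing in the present paper to compare your argument against.

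That said, your outline is essentially the standard Polyak-algebra argument and is correct in substance. One simplification you may have missed: the identity you worry about, that $\hat\theta^{-1}$ sends (the class of) a word $p$ to its fully semi-lettered version $\dot p$, holds \emph{exactly}, not merely modulo lower-rank terms. Indeed, expanding $\dot p$ as $\sum_{S}(-1)^{m-|S|}p_S$ (sum over subsets $S$ of the set of letter-pairs of $p$, with $p_S$ the subword on those pairs) and applying $\theta$ gives
\[
\theta(\dot p)=\sum_{T}\Bigl(\sum_{S\supseteq T}(-1)^{m-|S|}\Bigr)p_T,
\]
and the inner alternating sum is $0$ unless $T$ is the full set, where it is $1$. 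Hence $\theta(\dot p)=p$ on the nose, and no downward induction on rank is needed for the universality step: if $v'$ has degree $\le n$ and $p$ has rank $>n$, then $v'(\hat\theta^{-1}(p))=v'(\dot p)=0$ immediately.

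One small caveat: your argument that $\tilde\Gamma_n$ has degree \emph{exactly} $n$ (``$O_n$ is the identity on rank-$n$ words and $\hat\theta$ is injective'') is not quite complete, since what is needed is that some rank-$n$ word is nonzero in $G_n$, i.e.\ that $H_n\ne 0$. This fails for $n\le 3$ (as the paper itself records), so strictly speaking $\tilde\Gamma_n$ has degree $<n$ in those cases. This does not affect universality in the sense that every invariant of degree $\le n$ still factors through $\tilde\Gamma_n$, which is what is actually used downstream.
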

We define $\Gamma_n: \mathbb{Z} P \to H_n$ as the composite of $\tilde{\Gamma}_n$ and
the natural map from $G_n$ to $H_n$.
Because for $\sum a_i w_i$ in $\mathbb{Z} P$ where $a_i$ are integers
\begin{align}
  \tilde{\Gamma}_n \left( \sum a_i w_i \right) = \left( \sum a_i, \Gamma_n \left( \sum a_i w_i \right) \right),
\end{align}
to analyze $\tilde{\Gamma}_n$ is equivalent to analyzing $\Gamma_n$.
Later we give a numerical form of $\Gamma_n$.

The structures of $G_n$ for smaller $n$ have been determined.
\begin{proposition}[Gibson and Ito \protect{\cite{Gibson2011}}]
  For Gauss words, $G_0$, $G_1$, $G_2$, and $G_3$ are isomorphic to $\mathbb{Z}$ and
  $G_4$ is isomorphic to $\mathbb{Z} \oplus \mathbb{Z}/2\mathbb{Z}$.
\end{proposition}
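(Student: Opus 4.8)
The plan is to prove the statement by an explicit, finite computation for each fixed $n \in \{0,1,2,3,4\}$, exactly the kind of computation our algorithm in Section~\ref{sec:determination-group} automates, but here carried out by hand. First I would enumerate, up to isomorphism, all Gauss words of rank at most $n$. For $n \le 3$ this list is very short; for $n=4$ it is still manageable. This gives a finite generating set for $H_n$, hence for $G_n = \mathbb{Z} \oplus H_n$ via \eqref{eq:gn-decomposition}. Next I would write down the relations G1, G2, G3 restricted to these generators (relation G4 is automatic once we have truncated to rank $\le n$): G1 immediately kills every generator containing a consecutive repeated letter, G2 expresses each word containing a pattern $AB\ldots BA$ in terms of lower-complexity words with a coefficient $-2$, and G3 gives the remaining linear relations among the words that survive G1 and G2.

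The key step is then to assemble these relations into an integer relation matrix and compute its Smith normal form; the cokernel of that matrix is $H_n$, and adding the free $\mathbb{Z}$ summand from the empty word yields $G_n$. For $n \le 3$ I expect every surviving generator to be expressible, via G1 and G2 (and for rank $3$, possibly one application of G3), in terms of the empty word alone, so that $H_n = 0$ and $G_n \cong \mathbb{Z}$. Concretely: at rank $1$ and $2$ the only nonempty reduced words are $AA$ and words equivalent under G1/G2 to multiples of shorter words, all of which collapse; at rank $3$ one must check that the handful of genuinely rank-$3$ isomorphism classes are all forced to $0$ by G1, G2, and the single nontrivial instance of G3 available at that rank. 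For $n=4$ the same bookkeeping produces a relation matrix whose Smith normal form has exactly one elementary divisor equal to $2$ and the rest equal to $1$, giving $H_4 \cong \mathbb{Z}/2\mathbb{Z}$ and hence $G_4 \cong \mathbb{Z} \oplus \mathbb{Z}/2\mathbb{Z}$.

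The main obstacle will be the rank-$4$ case: one must correctly enumerate all rank-$4$ Gauss words up to isomorphism (there are several, and it is easy to miss one or to double-count), and then list \emph{all} instances of relation G3 whose four terms all have rank $\le 4$, since G3 is the relation most likely to be mishandled because each instance spreads across eight words and one must verify that each resulting word, after possible cancellation of equal isomorphism classes, still lies within rank $4$. Once the relation matrix is assembled correctly, the Smith normal form computation is routine and the appearance of the single factor $2$ — which ultimately traces back to the coefficient $2$ in relation G2 — pins down the answer. I would also note that this by-hand computation is subsumed by, and was cross-checked against, the computer program described in the next section, so for the write-up it suffices to present the generators, the reduced relations, and the resulting Smith normal form for each $n$.
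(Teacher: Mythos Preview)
The paper does not supply its own proof of this proposition: it is stated as a citation of Gibson and Ito \cite{Gibson2011}, and the authors then proceed directly to the new cases $n=5,6,7$. There is therefore no proof in the paper to compare your proposal against. That said, your plan---enumerate isomorphism classes of Gauss words of rank $\le n$, write down all instances of G1--G3 among them, assemble the integer relation matrix, and read off $H_n$ from its Smith normal form---is precisely the algorithm the paper formalizes in Section~\ref{sec:determination-group} and runs by machine for larger $n$; so your approach is fully consistent with the paper's methodology, just carried out by hand for the small cases. One small slip: in $H_n$ the target is to show each nonempty generator is \emph{zero}, not ``expressible in terms of the empty word,'' since the empty word lives in the complementary $\mathbb{Z}$ summand of \eqref{eq:gn-decomposition} and does not appear in $H_n$ at all.
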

We determine the structures of $G_n$ for $n = 5, 6, 7$ in the next section.

\section{Computational determination of the group $H_n$}
\label{sec:determination-group}

It is well-known that every finitely generated abelian group is (noncanonically) isomorphic to
the direct sum of cyclic groups, and when we are given generators and relations on the group,
we can construct an isomorphism in terms of matrix theory
(See \cite{Lang2002} for example).
We rephrase it in a suitable form for our situation.

\begin{proposition}
  Let $M$ be a free abelian group with basis $w_1, \dots, w_s$.
  Let $N$ be the subgroup generated by $\{ \sum_{i=1}^s a_{ji} w_i \mid j = 1, \dots, s \}$ for $a_{ji} \in \mathbb{Z}$ and $j = 1, \dots, t$,
  which corresponds to relations of $M$ so that $\sum_{i=1}^s a_{ji} w_i = 0$.
  Then, there are a nonzero integer $p$ and
  positive integers $d_1, \dots, d_l$ with $l = s - p$ such that $M/N$ is isomorphic to
  \begin{align}
    \mathbb{Z}^{p} \oplus \bigoplus^{l}_{i = 1} (\mathbb{Z} / d_i\mathbb{Z}), \label{eq:module_decomposition}
  \end{align}
  and such that if $l$ is greater than 1 then $d_{i-1}$ divides $d_i$ for $i = 2, \dots, l$.
  These integers $d_1, \dots, d_l$ and $p$ are uniquely determined.

  Moreover, if $A$ denotes the $s \times t$ matrix $(a_{ij})$ and $S$ denotes the $s \times t$ matrix
  \begin{align}
    \left(
    \begin{array}{ccccc|c}
      d_1 & & & & & \\
      & d_2 & & 0 & & 0 \\
      0 & & \ddots & & & \\
      & &  & d_l & & \\ \hline
	&  & 0 & & & 0 \\
    \end{array}
    \right)
    \label{Smith_normal_form}
  \end{align}
  then we can transform $A$ into $S$ by row and column operations,
  where row (respectively column) operations mean either
  interchanging two rows (respectively columns),
  adding a multiple of a row (respectively column) to another,
  or multiplying a row (respectively column) by $-1$.
  In other words, there are a $\mathbb{Z}$-invertible $s \times s$ matrix $U$
  and a $\mathbb{Z}$-invertible $t \times t$ matrix $V$
  so that $S = UAV$.
  The matrix $S$ is called the Smith normal form of $A$ and
  $d_i$ are called its elementary divisors.
\end{proposition}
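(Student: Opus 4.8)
The plan is to prove this as a standard structure theorem for finitely generated abelian groups presented by generators and relations, organized around the Smith normal form of the relation matrix $A$. I would start from the "moreover" part, since the existence of $U$ and $V$ with $S = UAV$ is the computational heart, and then read off the module decomposition as a corollary.

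\emph{Step 1: Reduction to Smith normal form.} First I would recall that the allowed row and column operations (transpositions, adding an integer multiple of one row/column to another, multiplication by $-1$) each correspond to left or right multiplication by a $\mathbb{Z}$-invertible matrix (a transposition matrix, an elementary transvection $I + cE_{kl}$, or $\operatorname{diag}(1,\dots,-1,\dots,1)$), all of which have determinant $\pm 1$ and hence lie in $GL_s(\mathbb{Z})$ or $GL_t(\mathbb{Z})$. So it suffices to show that a finite sequence of such operations brings $A$ to the diagonal shape \eqref{Smith_normal_form} with the divisibility chain $d_1 \mid d_2 \mid \cdots \mid d_l$. The classical argument is an induction on the size of the matrix using the Euclidean algorithm: choose a nonzero entry of minimal absolute value, move it to the $(1,1)$ position, and use division-with-remainder to clear the rest of its row and column; if some remaining entry is not divisible by that pivot, a further operation strictly decreases the minimal absolute value, so the process terminates with a matrix whose first row and column are zero except for $d_1$ at $(1,1)$, and with $d_1$ dividing every other entry. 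Then recurse on the lower-right $(s-1)\times(t-1)$ block. The number of nonzero diagonal entries produced is some $l$, and we set $p = s - l$.

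\emph{Step 2: From Smith form to the module decomposition.} Writing $S = UAV$, I would note that $U$ and $V$ being invertible over $\mathbb{Z}$ means they implement a change of basis of $M = \mathbb{Z}^s$ and a change of generating set for the relation submodule $N$; concretely, $N$ is generated by the columns of $A$, equivalently by the columns of $AV$, and applying $U$ identifies $(M, N)$ with $(\mathbb{Z}^s, \text{column span of } S)$. Since $S$ is diagonal with entries $d_1, \dots, d_l, 0, \dots, 0$ (and then possibly $t - s$ or $s - t$ extra zero columns/rows, which contribute nothing), the quotient splits as a direct sum over coordinates: coordinate $i \le l$ contributes $\mathbb{Z}/d_i\mathbb{Z}$, and each of the remaining $p = s - l$ coordinates contributes $\mathbb{Z}$. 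This gives the isomorphism in \eqref{eq:module_decomposition} with the divisibility condition inherited from Step~1. If any $d_i = 1$ the corresponding summand is trivial and can be absorbed into reindexing; the statement as written allows $d_i = 1$, so no further bookkeeping is needed. That $p$ is nonzero is exactly the hypothesis that $M/N$ is not finite, which is implicit in the intended application where $H_n$ has free rank one — I would state the nonvanishing of $p$ as following from the given data.

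\emph{Step 3: Uniqueness.} For uniqueness of $p$ and the $d_i$, I would invoke the invariant characterization: $p$ is the rank of $M/N$, i.e. $\dim_{\mathbb{Q}}\bigl((M/N)\otimes\mathbb{Q}\bigr)$, which is basis-independent; and the $d_i$ with $d_1 \mid \cdots \mid d_l$ are pinned down by the greatest-common-divisor invariants of $A$, namely $d_1 \cdots d_k = \gcd$ of all $k \times k$ minors of $A$ (the $k$-th determinantal divisor), a quantity unchanged by multiplication by $GL$-matrices since such multiplications send $k$-minors to $\mathbb{Z}$-linear combinations of $k$-minors and are reversible. Alternatively one can argue via the torsion subgroup and its $p$-primary pieces. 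The main obstacle I anticipate is purely expository rather than mathematical: this is a textbook result (as the paper itself acknowledges by citing \cite{Lang2002}), so the real work is only in phrasing the dictionary between the three listed matrix operations and elements of $GL(\mathbb{Z})$ cleanly, and in making precise that "column span of $A$" is the submodule $N$ under the basis $w_1,\dots,w_s$. I would therefore keep Steps 1 and 3 brief, citing \cite{Lang2002}, and spend the few lines of actual argument on Step 2, the translation that the paper actually needs.
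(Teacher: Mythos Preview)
Your sketch is correct and follows the standard textbook argument (Euclidean-pivot reduction to Smith form, then read off the quotient, then determinantal divisors for uniqueness). Note, however, that the paper does not actually supply a proof of this proposition: it is stated as a well-known fact and attributed to \cite{Lang2002} in the sentence immediately preceding the proposition. So there is nothing to compare your approach against; your write-up is simply the standard reference expanded.

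One small slip in your Step~2: you justify the clause ``$p$ is nonzero'' by saying that $H_n$ has free rank one, but that is not right. Proposition~\ref{prop_form_Hn} shows $H_n$ is a finite $2$-group, so in the application $p = 0$; the free $\mathbb{Z}$ summand belongs to $G_n = \mathbb{Z} \oplus H_n$, not to $H_n$ itself. The word ``nonzero'' in the statement is almost certainly a typo for ``nonnegative,'' and you should flag it as such rather than try to justify it.
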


Because the group $H_n$ is finitely generated,
it is isomorphic to a group of the form (\ref{eq:module_decomposition}).
The second type relations G2 restrict possibilities of the structure of $H_n$ further.

\begin{proposition}
  \label{prop_form_Hn}
  The group $H_n$ is isomorphic to a group of the form
  \begin{align}
    \bigoplus^{n - 3}_{i = 1} (\mathbb{Z} / 2^{i}\mathbb{Z})^{p_i}, \label{eq:grp_form}
  \end{align}
  for some nonnegative integers $p_i$.

  In particular, for an arbitrary Gauss word $w$ of rank $m$ in $H_n$, we have
  \begin{align}
    2^{n - m + 1} w = 0. \label{eq:two_power}
  \end{align}
\end{proposition}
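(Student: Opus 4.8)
The plan is to prove the divisibility \eqref{eq:two_power} first, directly from the relation G2, and then to deduce the form \eqref{eq:grp_form} from it together with the structure theorem for finitely generated abelian groups recalled above and one further input concerning Gauss words of small rank. (For $n\le 3$ everything reduces to $H_n=0$, which is already known.)

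The key observation is a ``doubling'' consequence of G2. Let $w$ be a Gauss word of rank $m$ with $1\le m\le n$. Since $m\ge 1$ some letter, say $A$, occurs twice in $w$, so we may write $w=xAyAz$ for possibly empty words $x,y,z$; choosing a letter $B$ not occurring in $w$ and inserting it immediately after the first $A$ and immediately before the second $A$ produces a Gauss word $w'=xAByBAz$ of rank $m+1$ with $w'+2w=0$ in $H_n$, by G2. Iterating, $w$ gives rise to Gauss words $w=w_0,w_1,w_2,\dots$ with $w_k$ of rank $m+k$ and $2^k w=(-1)^k w_k$ in $H_n$. Taking $k=n-m+1$, the word $w_k$ has rank $n+1$ and so vanishes by G4; hence $2^{\,n-m+1}w=0$, which is \eqref{eq:two_power}. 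In particular every generator of $H_n$ is torsion of $2$-power order, so $H_n$ is a finite abelian $2$-group, and by the structure theorem it is isomorphic to $\bigoplus_{i\ge 1}(\mathbb{Z}/2^i\mathbb{Z})^{p_i}$ for integers $p_i\ge 0$, almost all zero.

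To obtain the sharper form \eqref{eq:grp_form} it now suffices to show $2^{\,n-3}H_n=0$, i.e. that $p_i=0$ for $i\ge n-2$. For a generating Gauss word of rank $m\ge 4$ this is immediate from \eqref{eq:two_power}. The remaining generators have rank at most $3$: those of rank $1$ vanish by G1, so the matter reduces to checking that the generators of rank $2$ and $3$ are annihilated by $2^{\,n-3}$ as well. Since there are only finitely many isomorphism classes of such words, I would treat them by hand using G1 and G3: the words with an adjacent repeated letter are killed by G1, and for the rest one uses G3. For instance, taking all four auxiliary words in G3 to be empty gives $ABAB=BACACB-ABACBC$, and further well-chosen applications of G3 (now with one auxiliary word a single letter) express each rank-$3$ generator as a $\mathbb{Z}$-combination of Gauss words of rank $4$; combining these, every rank-$2$ and rank-$3$ generator becomes a $\mathbb{Z}$-combination of rank-$4$ generators, which are annihilated by $2^{\,n-3}$. (Alternatively one may invoke that every Gauss word of rank $\le 3$ is homotopic to the empty word and use that $\hat\theta^{-1}$ sends such a generator to an alternating sum of homotopy classes of its subwords, all then trivial and with total coefficient $0$, so the generator itself is $0$ in $H_n$.) This low-rank vanishing is also confirmed by the computations of Section~\ref{sec:determination-group}. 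With it in hand, every generator of $H_n$ is annihilated by $2^{\,n-3}$, so $2^{\,n-3}H_n=0$ and \eqref{eq:grp_form} follows.

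I expect the real work to lie in the last step, the control of the rank-$\le 3$ generators. The doubling argument alone is too weak here: each step up in rank costs a factor of $2$, so it only shows a rank-$2$ (respectively rank-$3$) generator is killed by $2^{\,n-1}$ (respectively $2^{\,n-2}$), which is insufficient. It is precisely the extra relations G1 and G3 --- equivalently, the homotopy-triviality of small Gauss words --- that collapse the low-rank part and pin the top index of the decomposition at $n-3$ rather than $n-1$.
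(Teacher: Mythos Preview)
Your doubling argument from G2 to establish \eqref{eq:two_power} is exactly the paper's proof. Where you go beyond the paper is in deducing the form \eqref{eq:grp_form}: the paper simply declares this ``obvious'' from \eqref{eq:two_power}, whereas you correctly notice that \eqref{eq:two_power} by itself only bounds the exponent by $2^{n-1}$ (coming from the rank-$2$ generator $ABAB$), so that additional input is needed to reach $2^{n-3}$. Your second route---observing that any rank-$\le 3$ generator $w$ already vanishes in $G$ because $\hat\theta^{-1}(w)$ is the alternating sum $\sum_{q\triangleleft w}(-1)^{\mathrm{rank}(w)-\mathrm{rank}(q)}[q]$ in $\mathbb{Z}P$, each $[q]$ equals $[\emptyset]$ by the homotopy triviality of rank-$\le 3$ Gauss words, and the total coefficient $\sum_{k=0}^{m}(-1)^{m-k}\binom{m}{k}$ vanishes for $m\ge 1$---is correct and cleanly closes this gap. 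Your first route via direct G3 manipulations is only sketched (you stop at $ABAB=BACACB-ABACBC$ and assert the rank-$3$ step), but the $\hat\theta$ argument makes it unnecessary. In short, your proof follows the paper's line for \eqref{eq:two_power} and then supplies a justification for the sharper exponent that the paper omits.
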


\begin{proof}
  From second type relation G2, for an arbitrary Gauss word $w$ of rank $m$,
  there exists a Gauss word $w'$ of rank $m + 1$ so that $w' + 2w = 0$.
  If $m = n$ then the word $w'$ has rank $n + 1$ and $w' = 0$ in $H_n$.
  Therefore, we have $2w = 0$.
  Easy induction proves the second assertion for $m < n$.
  Obviously, under the relation (\ref{eq:two_power}), $H_n$ must be of the form as in (\ref{eq:grp_form}).
\end{proof}

To determine the structure of $H_n$
is equivalent to computing the Smith normal form of the matrix obtained from
the generators and the relations.
To be more precise, we carried out the following procedures and determined $H_n$.
for $n = 5, \dots, 7$

\begin{enumerate}
  \item Generate all Gauss words whose rank is less than or equal to $n+1$ excluding the empty word.
  \item Remove words of the form $xAAy$ and words of rank $n+1$ from the set of Gauss words generated at the first step.
  \item Keep all Gauss words generated at the second step as a set of generators for $H_n$.
  \item Generate all relations for the set of Gauss words generated at the first step;
	to be more precise,
	generate second type relations from Gauss words of the form $xAByBAz$ and
	generate third type relations from Gauss words of the form $xAByACzBCt$.
  \item Remove words of the form $xAAy$ and words of rank $n+1$ from the set of relations generated at the fourth step.
  \item Define a matrix from generators obtained at the third step and relations obtained at the fifth step.
  \item Compute the Smith normal form of the matrix by using row and column operations.
\end{enumerate}

Smith normal form computation has two main difficulties.
One is coefficient growth: the absolute value of a coefficient becomes larger and larger
as the transformation proceeds.
Such a large coefficient raises an overflow error on computation.
Fortunately, Proposition \ref{prop_form_Hn} allows us
to carry out the transformation on $\mathbb{Z}/(2^{n-1})\mathbb{Z}$.

\begin{table}[htb]
  \begin{tabular}{|r|r|r|r|r|}
    \hline
    & \shortstack{number of \\ generators} & \shortstack{number of second \\ type relations} &
	\shortstack{number of third \\ type relations} & \shortstack{number of total \\ unique relations} \\
    \hline
    $H_4$ & 42     & 161     & 62      & 97 \\
    $H_5$ & 371    & 1806    & 672     & 998 \\
    $H_6$ & 4026   & 23736   & 8652    & 12287 \\
    $H_7$ & 51870  & 358644  & 128926  & 176591 \\
    $H_8$ & 773185 & 6129164 & 2181235 & 2900594 \\
    \hline
  \end{tabular}
  \caption{Numbers of generators and relations for $H_n$.
  To obtain a group isomorphic to $H_n$, we need to transform a matrix into Smith normal form.
  The row size of the matrix is the number of generators and the column size is the number of total unique relations.
  Note that numbers of relations in the second and third columns count duplicated relations.}
  \label{number_generators_relations}
\end{table}

The other difficulty is ``fill-in'' on transformation of a sparse matrix,
which occurs also on Gaussian elimination.
The matrix obtained from our relations and generators is extremely sparse
because the numbers of relations and generators are very large (Table \ref{number_generators_relations})
while the relations have at most only 8 terms.
The sparsity of a matrix is getting lost gradually during the operations of the matrix.
This is quite a difficult problem and has been studied
as an elimination game of a chordal graph on graph theory \cite{Heggernes2006}.
In our case, because about 50 \% of relations have only one or two terms,
we can delay fill-in by eliminating these relations at the beginning.

We consider the matrix size as a rough estimate of computation amount.
The number of isomorphism classes of rank $n$ Gauss words
is $(2n - 1) \cdot (2n - 3) \cdots 3 \cdot 1$.
The number of generators excluding words of the form $xAAy$
is also multiplied by about $2n + 1$
as the rank $n$ increases to $n + 1$.
The growth of the number of corresponding relations is
of the same order as the number of generators.
Then, the number of the nonzero matrix entries increases roughly at an order of $4n^2$.
Table \ref{number_generators_relations} presents the actual number of generators and
the number of relations in our computation.

The growth of computation for the rank prevented us from determining the group $H_8$;
in fact, our computer program to obtain $H_7$ took more than one week and
it probably takes hundreds of days to obtain $H_8$.
To conclude this section,
we give our computational result for $H_5$, $H_6$, and $H_7$ as a proposition.
\begin{proposition}
  \label{prop_numerical_result}
  We have the following.
  \begin{align}
    G_5 & \cong \mathbb{Z} \oplus (\mathbb{Z}/2\mathbb{Z})^6 \oplus \mathbb{Z}/4\mathbb{Z} \\
    G_6 & \cong \mathbb{Z} \oplus (\mathbb{Z}/2\mathbb{Z})^{32} \oplus (\mathbb{Z}/4\mathbb{Z})^6 \oplus \mathbb{Z}/8\mathbb{Z} \\
    G_7 & \cong \mathbb{Z} \oplus (\mathbb{Z}/2\mathbb{Z})^{188} \oplus (\mathbb{Z}/4\mathbb{Z})^{32}
    \oplus (\mathbb{Z}/8\mathbb{Z})^6 \oplus \mathbb{Z}/16\mathbb{Z}
  \end{align}
\end{proposition}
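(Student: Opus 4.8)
The plan is to reduce each of the three isomorphisms to a single finite Smith normal form computation, leaning on the structural constraints already in hand. By the splitting $G_n = \mathbb{Z} \oplus H_n$ of (\ref{eq:gn-decomposition}) it suffices to identify $H_n$ for $n = 5,6,7$, and by Proposition \ref{prop_form_Hn} we already know $H_n \cong \bigoplus_{i=1}^{n-3}(\mathbb{Z}/2^i\mathbb{Z})^{p_i}$ for some nonnegative integers $p_i$; in particular $H_n$ is a finite $2$-group, so the free rank "$p$" of the general decomposition (\ref{eq:module_decomposition}) is $0$ and the whole free part of $G_n$ sits in its $\mathbb{Z}$ summand. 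Thus the real content of the proposition is the determination of the exponent vectors $(p_1,\dots,p_{n-3})$.

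First I would realize $H_n$ as an explicit cokernel. Let $M$ be the free abelian group on the set of isomorphism classes of nonempty Gauss words of rank at most $n$ with every word of the form $xAAy$ deleted (such words are $0$ by G1), and let $N \le M$ be generated by the images under $O_n$ of all instances of the relations G2 and G3, with any surviving $xAAy$-term further set to $0$ by G1: from each Gauss word $xAByBAz$ of rank $\le n+1$ one records $xAByBAz + 2\,xAyAz = 0$, from each $xAByACzBCt$ the corresponding G3 relation, discarding throughout the rank-$(n+1)$ terms (zero by G4), and letting $x,y,z,t$ range over all placements of the possibly-empty factors. Then $H_n = M/N$ by the definition of $H_n$, and this is exactly the situation of the Smith normal form proposition, with $M$ free of the finite rank $s$ recorded in Table \ref{number_generators_relations} and $N$ cut out by the $t$ relations recorded there. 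Hence the isomorphism type of $H_n$ is read off from the elementary divisors of the $s \times t$ relation matrix $A = (a_{ij})$ via (\ref{Smith_normal_form}). This is precisely the seven-step procedure listed above.

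Next I would carry out the Smith normal form computation. Two observations make it tractable. Since, by (\ref{eq:two_power}), every element of $H_n$ is annihilated by $2^{n-1}$, each elementary divisor of $A$ is a power of $2$ strictly less than $2^{n-1}$, so the entire row and column reduction may be performed over $\mathbb{Z}/2^{n-1}\mathbb{Z}$, which removes the coefficient-growth problem. And $A$ is extremely sparse — each G2 or G3 relation contributes at most eight nonzero entries — so clearing the many one- and two-term relations first postpones fill-in long enough to push the elimination through to diagonal form. Running this as a computer program on the matrices of Table \ref{number_generators_relations} outputs the elementary divisors of $A$; translating them back through the Smith normal form proposition and (\ref{eq:gn-decomposition}) yields the stated isomorphisms for $G_5$, $G_6$, $G_7$.

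I expect the main obstacle to be computational rather than conceptual: controlling the memory and running time of the Smith normal form for $H_7$, whose relation matrix has about $5.2 \times 10^4$ rows and $1.8 \times 10^5$ columns, where the loss of sparsity makes the intermediate dense matrix over $\mathbb{Z}/2^{n-1}\mathbb{Z}$ the binding constraint — and it is exactly this that blocks the same argument at $n = 8$. A secondary point to handle carefully is the faithfulness of the enumeration in the first step: one must be certain that the list of Gauss words of rank $\le n+1$ is complete and that every placement of the empty-allowed factors $x,y,z,t$ in the patterns $xAAy$, $xAByBAz$, and $xAByACzBCt$ is generated, since a silent omission would corrupt the presentation; a convenient consistency check is that running the same code for $n=4$ must recover $G_4 \cong \mathbb{Z} \oplus \mathbb{Z}/2\mathbb{Z}$.
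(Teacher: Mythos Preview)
Your proposal is correct and follows essentially the same approach as the paper: the paper presents Proposition~\ref{prop_numerical_result} as the output of exactly the seven-step Smith normal form procedure you describe, with the same reduction to $H_n$ via (\ref{eq:gn-decomposition}), the same use of Proposition~\ref{prop_form_Hn} to justify computing over $\mathbb{Z}/2^{n-1}\mathbb{Z}$, and the same sparsity heuristics. There is no separate written proof in the paper beyond the algorithmic description you have reproduced.
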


\section{Simplified form of the universal invariant}
\label{sec:simplified-form}

Let $Q_n = \{ w_1, \dots, w_s \}$ be the finite set of isomorphism classes
of Gauss words of rank $n$ or less excluding the empty word
and let an integer $t$ be the number of relations.
Then, $\{ w_1, \dots, w_s \}$ is the basis of $\mathbb{Z} Q_n$ and
$\mathbb{Z} Q_n$ is isomorphic to $\mathbb{Z}^s$.
Defining a matrix $A$ from the generators of $\mathbb{Z} Q_n$ and the relations,
we transformed the matrix $A$ into the Smith normal form $S$
by using row operations $U$ and column operations $V$ in the last section.
The matrix $U$ induces an isomorphism between $\mathbb{Z}^s/\image A$ and $\mathbb{Z}^s/\image S$.
From the definitions of $H_n$ and $A$, it holds that $H_n$ is isomorphic to $\mathbb{Z}^s/\image A$.

\begin{align}
  \begin{minipage}[c]{5cm}
    \xymatrix{
      & \mathbb{Z} Q_n \ar[d]^{\cong} \ar[r] & H_n \ar[d]^{\cong} \\
    \mathbb{Z}^t \ar[r]^{A}  & \mathbb{Z}^s \ar[d]^{U} \ar[r] & \mathbb{Z}^s/\image A \ar[d]^{U} \\
    \mathbb{Z}^t \ar[u]_{V}  \ar[r]_{S} & \mathbb{Z}^s \ar[r] & \mathbb{Z}^s/\image S
    }
  \end{minipage}
  \label{diagram_smith}
\end{align}

Because the map $\Gamma_n: \mathbb{Z} P \to H_n$ is a universal invariant of degree $n$,
so is the composite of $\Gamma_n$ and the isomorphism $H_n \cong \mathbb{Z}^s/\image S$.
To exhibit this universal invariant, we need to determine the image of each element of $Q_n$.
Because the map
\begin{align}
  U \circ \Gamma_n (p) = U \left( \sum_{i = 1}^s \langle w_i, p \rangle w_i \right) = \sum_{i = 1}^s \langle w_i, p \rangle U(w_i)
\end{align}
gives the universal invariant from $\mathbb{Z} P$ to $\mathbb{Z}^s/\image S$,
it is sufficient to calculate $U(w_i)$ in order to obtain the explicit form.

Suppose that the Smith normal form $S = UAV$ has the form (\ref{Smith_normal_form}),
i.e., the elementary divisors of the matrix $A$ are $d_1, \dots, d_l$.
It follows from Proposition \ref{prop_form_Hn} that we have $l = s$ and
$d_j = 2^{q_j}$ for some nonnegative integer $q_j$ and $j = 1, \dots, s$.
If $d_j = 1$ for all $j$, then $U(w_i) = 0$ in $\mathbb{Z}^N/\image S$ for all $i$
and hence the invariant $U \circ \Gamma_n$ is trivial.
This situation occurs in the case of $\Gamma_n$ for $n = 0, \dots, 3$.
We consider the case when $d_{k-1} = 1$ and $d_k \ne 1$.
Under the identification $\mathbb{Z} Q_n \cong \mathbb{Z}^s$,
$w_i$ is assumed to map to the $i$-th unit vector $e_i = (0, \dots, 0, 1, 0, \dots, 0)^T$,
where $v^T$ denotes the transpose of a vector $v$.
Let $v_i$ be the $(s-k+1)$-dimensional vector consisting of the last $s-k+1$ entries of $Ue_i$
i.e., $v_i = (x_k, \dots, x_s)^T$ if we write $Ue_i = (x_1, \dots, x_s)^T$.
From the diagram (\ref{diagram_smith}), we see that
$U(w_i) \ne 0$ in $\mathbb{Z}^s/\image S$
if and only if $v_i \ne 0$ in $\oplus_{j=k}^s \mathbb{Z}/d_j\mathbb{Z}$.
Therefore, by adding subscript $n$ to $U$, $s$, $w_i$, and $v_i$ in order to point out dependency on
the degree $n$ of the finite type invariant,
we obtain the simplified numerical form $\bar{\Gamma}_n$ of the universal invariant $\Gamma_n$:
\begin{align}
  \bar{\Gamma}_n(p) = U_n \circ \Gamma_n (p) = \sum_{i = 1}^{s_n} \langle w_{n, i}, p \rangle v_{n, i}.
\end{align}
Note that the map $\bar{\Gamma}_n$ is a map from $\mathbb{Z} P$ to $\oplus_{j=k}^s \mathbb{Z}/d_j\mathbb{Z}$.

We obtained the universal finite type invariant for $n = 4, 5$ from our computation in this way.
All words of rank $4$ or less except the following words $w_1, \dots, w_6$
map to $0$ in $\mathbb{Z}^{s_4}/\image S$ via $U_4$:
\begin{align}
  \begin{array}{ll}
    w_1 = ABACDCBD, & w_2 = ABCACDBD, \\
    w_3 = ABCADBDC, & w_4 = ABCBDACD, \\
    w_5 = ABCDBDAC, & w_6 = ABCDCADB.
  \end{array}
  \label{g4_non_zero_words}
\end{align}
Then, we obtain the explicit form of $\bar{\Gamma}_4$,
which is the same as the invariant of Proposition 8.2 in \cite{Gibson2011b}.

\begin{proposition}[Gibson and Ito \cite{Gibson2011b}]
  We define $w_1, \dots, w_6$ by (\ref{g4_non_zero_words}).
  The map $\bar{\Gamma}_4: \mathbb{Z} P \to \mathbb{Z}/2\mathbb{Z}$ defined by
  \begin{align}
    \bar{\Gamma}_4(p) = \left( \sum_{i = 1}^{6} \langle w_i, p \rangle \right) \bmod 2
  \end{align}
  is the universal finite type invariant of degree $4$.
\end{proposition}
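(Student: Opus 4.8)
The plan is to read off $\bar{\Gamma}_4$ directly from the general construction of Section \ref{sec:simplified-form}, specialized to $n=4$, using the already‑known structure of $G_4$. First I would invoke the Gibson–Ito computation $G_4\cong\mathbb{Z}\oplus\mathbb{Z}/2\mathbb{Z}$, which by the decomposition (\ref{eq:gn-decomposition}) gives $H_4\cong\mathbb{Z}/2\mathbb{Z}$. By Proposition \ref{prop_form_Hn} (or simply because $M/N$ is finite of order $2$), in the Smith normal form $S=U_4AV_4$ of the $42\times 97$ matrix $A$ associated, as in Section \ref{sec:determination-group}, to the $s_4=42$ generators and the $97$ relations of $H_4$, all elementary divisors equal $1$ except the last one, which equals $2$. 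Hence $\mathbb{Z}^{s_4}/\image S\cong\mathbb{Z}/2\mathbb{Z}$, carried entirely by the last coordinate, so in the notation of Section \ref{sec:simplified-form} we have $k=s_4$, the target $\bigoplus_{j=k}^{s_4}\mathbb{Z}/d_j\mathbb{Z}=\mathbb{Z}/2\mathbb{Z}$, and $v_{4,i}$ is just the last entry of $U_4e_i$ reduced modulo $2$; thus $\bar{\Gamma}_4$ is a map $\mathbb{Z}P\to\mathbb{Z}/2\mathbb{Z}$.

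Next I would use the fact recorded above that among the Gauss words of rank at most $4$ the only generators with $U_4(w_i)\ne 0$ in $\mathbb{Z}^{s_4}/\image S$ are $w_1,\dots,w_6$ of (\ref{g4_non_zero_words}); equivalently $v_{4,i}=1$ precisely for those six and $v_{4,i}=0$ for all others. Substituting into $\bar{\Gamma}_4(p)=\sum_{i=1}^{s_4}\langle w_{4,i},p\rangle v_{4,i}$ collapses the sum to $\bar{\Gamma}_4(p)=\bigl(\sum_{i=1}^{6}\langle w_i,p\rangle\bigr)\bmod 2$. Since $\Gamma_4$ is a universal invariant of degree $4$ and $U_4$ realizes the isomorphism $H_4\xrightarrow{\sim}\mathbb{Z}^{s_4}/\image S\cong\mathbb{Z}/2\mathbb{Z}$, the composite $\bar{\Gamma}_4=U_4\circ\Gamma_4$ is again a universal invariant of degree $4$, exactly as in the general discussion of Section \ref{sec:simplified-form}. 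As a sanity check, $\bar{\Gamma}_4(ABACDCBD)=1$, because the only rank‑$4$ subword of the rank‑$4$ word $ABACDCBD=w_1$ is itself, so the invariant is nontrivial, consistent with $H_4\ne 0$; matching the displayed formula with Proposition 8.2 of \cite{Gibson2011b} then identifies the two invariants.

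The step that does real work is the assertion that $w_1,\dots,w_6$ are exactly the generators surviving under $U_4$. In the scheme of this paper this is output by the Smith normal form computation of Section \ref{sec:determination-group} applied to $n=4$, which for this small case can also be carried out by hand (reducing the $42\times 97$ integer matrix, whose entries after the reductions may be taken in $\mathbb{Z}/8\mathbb{Z}$ by Proposition \ref{prop_form_Hn}). The cross‑check I would actually include avoids the matrix entirely: verify directly that $p\mapsto\bigl(\sum_{i=1}^{6}\langle w_i,p\rangle\bigr)\bmod 2$ is unchanged under the homotopy moves H1, H2, H3 — this reduces to bookkeeping the rank‑$4$ subwords created and destroyed by each move, governed by the relations G1, G2, G3 — note that it is nonzero on $ABACDCBD$, and then observe that any nonzero degree‑$4$ finite type invariant valued in $\mathbb{Z}/2\mathbb{Z}$ factors through the universal $\Gamma_4$ by a map $H_4\cong\mathbb{Z}/2\mathbb{Z}$ which must be an isomorphism, hence is itself universal. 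The main obstacle is therefore purely computational and combinatorial rather than conceptual: accurately tracking the subword counts (equivalently, performing the matrix reduction without error), since a single miscounted relation would change the list $w_1,\dots,w_6$.
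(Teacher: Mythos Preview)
Your proposal is correct and follows essentially the same approach as the paper: the proposition is not proved separately but is read off from the general construction of Section~\ref{sec:simplified-form} specialized to $n=4$, together with the computational output that exactly $w_1,\dots,w_6$ survive under $U_4$. Your added remarks (the direct verification of homotopy invariance and the nontriviality check on $ABACDCBD$) go a bit beyond what the paper records, but they are compatible with and complementary to the paper's treatment.
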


\begin{table}
  \scalebox{0.75}{
  \begin{tabular}{|c|lll|}
\hline
Vector in $(\mathbb{Z}/2\mathbb{Z})^6 \oplus \mathbb{Z}/4\mathbb{Z}$ &
\multicolumn{3}{c|}{Words of rank $5$ or less that are not $0$ in $G_5$} \\
\hline
$(0, 0, 0, 0, 0, 0, 1)$ &
ABACDCBD & ABCBDACD & ABCDCADB \\ \hline
$(0, 0, 0, 0, 0, 0, 2)$ &
ABACDECBDE & ABACDECBED & ABACDECDBE \\
& ABACDEDCBE & ABCABDEDCE & ABCACDEBDE \\
& ABCACDEBED & ABCADEBDEC & ABCADEBEDC \\
& ABCADEDBCE & ABCADEDCBE & ABCBADEDCE \\
& ABCBDEACDE & ABCBDEACED & ABCBDEADCE \\
& ABCBDEDACE & ABCDABDECE & ABCDABECED \\
& ABCDACDEBE & ABCDACEBED & ABCDADCEBE \\
& ABCDADEBCE & ABCDADEBEC & ABCDADECBE \\
& ABCDAEBCED & ABCDAEBECD & ABCDAEBEDC \\
& ABCDAECBED & ABCDAECEBD & ABCDBADECE \\
& ABCDBAECED & ABCDBCEADE & ABCDBEACDE \\
& ABCDBEADCE & ABCDBECEAD & ABCDCBEADE \\
& ABCDCEABDE & ABCDCEADBE & ABCDCEADEB \\
& ABCDCEBADE & ABCDEBCEAD & ABCDEBDEAC \\
& ABCDEBEACD & ABCDEBEADC & ABCDEBEDAC \\
& ABCDECADEB & ABCDECAEDB & ABCDECBEAD \\
& ABCDECDAEB & ABCDECEABD & ABCDECEBAD \\
& ABCDEDABEC & ABCDEDAEBC & ABCDEDAECB \\
& ABCDEDBAEC & ABCDEDCAEB & \\ \hline
$(0, 0, 0, 0, 0, 1, 2)$ &
ABCDEDBEAC & & \\ \hline
$(0, 0, 0, 0, 1, 0, 2)$ &
ABCDECEADB & & \\ \hline
$(0, 0, 0, 1, 0, 0, 0)$ &
ABCACDBEDE & ABCBDEAECD & ABCDCEAEDB \\
& ABCDEDACEB & & \\ \hline
$(0, 0, 0, 1, 0, 1, 2)$ &
ABCADCDEBE & & \\ \hline
$(0, 0, 0, 1, 1, 0, 0)$ &
ABCADECDBE & ABCADEDBEC & ABCDBECADE \\ \hline
$(0, 0, 0, 1, 1, 0, 2)$ &
ABCADBEDEC & ABCDCEDAEB & \\ \hline
$(0, 0, 0, 1, 1, 1, 0)$ &
ABACDBDECE & ABACDBECED & ABACDCEBED \\
& ABACDEDBCE & ABCBDEDCAE & ABCDCAEDEB \\
& ABCDCEBDAE & & \\ \hline
$(0, 0, 1, 0, 0, 0, 0)$ &
ABCACDEDBE & ABCADBECDE & ABCDBEACED \\
& ABCDECAEBD & & \\ \hline
$(0, 0, 1, 0, 1, 1, 0)$ &
ABCBDAECED & & \\ \hline
$(0, 0, 1, 1, 0, 0, 0)$ &
ABACDCBEDE & ABCBDACEDE & ABCBDADECE \\
& ABCBDECEAD & ABCDADBECE & ABCDADECEB \\
& ABCDAECEDB & & \\ \hline
$(0, 0, 1, 1, 0, 1, 2)$ &
ABACDCEDBE & & \\ \hline
$(0, 0, 1, 1, 1, 0, 0)$ &
ABCDACEDBE & ABCDAEBDEC & ABCDBDEACE \\ \hline
$(0, 0, 1, 1, 1, 0, 2)$ &
ABCBDCEADE & ABCDBEDEAC & \\ \hline
$(0, 0, 1, 1, 1, 1, 0)$ &
ABACBDEDCE & ABCDCAEBED & ABCDCEBEAD \\
& ABCDEBECAD & & \\ \hline
$(0, 1, 0, 0, 0, 0, 0)$ &
ABCDBECAED & ABCDEACEBD & ABCDEADBEC \\ \hline
$(0, 1, 0, 0, 0, 0, 2)$ &
ABCBDEDAEC & & \\ \hline
$(0, 1, 0, 0, 1, 0, 0)$ &
ABCADEDCEB & & \\ \hline
$(0, 1, 0, 1, 0, 1, 2)$ &
ABACDCEBDE & ABACDEDBEC & ABCADBDECE \\ 
& ABCDBDAECE & & \\ \hline
$(0, 1, 0, 1, 1, 0, 0)$ &
ABACDBCEDE & ABACDBEDEC & ABCBDCAEDE \\ \hline
$(0, 1, 1, 0, 0, 0, 0)$ &
ABCDBEADEC & ABCDEBDACE & ABCDECADBE \\ \hline
$(0, 1, 1, 0, 0, 0, 2)$ &
ABCDBDEAEC & & \\ \hline
$(0, 1, 1, 0, 0, 1, 0)$ &
ABCDBDECAE & & \\ \hline
$(0, 1, 1, 1, 0, 0, 2)$ &
ABCBDCEAED & ABCDCADEBE & \\ \hline
$(0, 1, 1, 1, 1, 0, 0)$ &
ABACBDCEDE & ABCADCEDEB & ABCADEBDCE \\
& ABCBDCEDAE & ABCDACEBDE & ABCDAEDBEC \\
& ABCDBEDACE & ABCDBEDAEC & ABCDEBDAEC \\
& ABACBDCD & & \\ \hline
$(0, 1, 1, 1, 1, 0, 3)$ &
ABCACDBD & ABCADBDC & ABCDBDAC \\ \hline
$(0, 1, 1, 1, 1, 1, 2)$ &
ABACDECEBD & ABCBDAEDEC & \\ \hline
$(1, 0, 0, 0, 0, 0, 0)$ &
ABCADBECED & ABCADECEBD & ABCBDAECDE \\
& ABCBDEDCEA & ABCDBDECEA & ABCDBECEDA \\
& ABCDCAEDBE & ABCDCEBDEA & ABCDECEBDA \\
& ABCDEDBECA & & \\ \hline
$(1, 0, 0, 0, 1, 1, 2)$ &
ABCADCEBED & ABCADEBECD & ABCBDAEDCE \\
& ABCDCAEBDE & & \\ \hline
  \end{tabular}
  }
  \caption{Words of rank $5$ mapping to nonzero elements in
  $(\mathbb{Z}/2\mathbb{Z})^6 \oplus \mathbb{Z}/4\mathbb{Z}$ via the isomorphism $U_5$
  and their values. In Proposition \ref{prop:gamma5} we let $W_5$ be the set of words in the right column and
  $v(w)$ be the vector in the left column corresponding to $w$ in $W_5$.}
  \label{g5_non_zero_words}
\end{table}

For $n = 5$ Table \ref{g5_non_zero_words}
shows pairs of words and nonzero vectors in
$(\mathbb{Z}/2\mathbb{Z})^6 \oplus \mathbb{Z}/4\mathbb{Z}$.
This is enough to obtain $\bar{\Gamma}_5$.

\begin{proposition}
  \label{prop:gamma5}
  Let $W_5$ be the set of words appearing in Table \ref{g5_non_zero_words} and
  $v(w)$ be the corresponding vector in $(\mathbb{Z}/2\mathbb{Z})^6 \oplus \mathbb{Z}/4\mathbb{Z}$
  for $w$ in $W_5$.
  The map $\bar{\Gamma}_5: \mathbb{Z} P \to (\mathbb{Z}/2\mathbb{Z})^6 \oplus \mathbb{Z}/4\mathbb{Z}$ defined by
  \begin{align}
    \bar{\Gamma}_5(p) = \sum_{w \in W_5} \langle w_i, p \rangle v(w)
  \end{align}
  is the universal finite type invariant of degree $5$.
\end{proposition}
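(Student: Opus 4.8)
The plan is to obtain Proposition~\ref{prop:gamma5} by specializing the machinery of Section~\ref{sec:simplified-form} to $n=5$ and feeding in the computational output of Proposition~\ref{prop_numerical_result}. By Gibson and Ito's result the map $\tilde{\Gamma}_5$ is a universal invariant of degree $5$; by the splitting $G_5 = \mathbb{Z}\oplus H_5$ of (\ref{eq:gn-decomposition}) and the discussion following it, the map $\Gamma_5\colon \mathbb{Z}P\to H_5$ is a universal invariant of degree $5$. Composing with the isomorphism $U_5\colon H_5\xrightarrow{\sim}\mathbb{Z}^{s_5}/\image S$ induced by the row operations that carry the relation matrix $A$ to its Smith normal form $S$ — see diagram~(\ref{diagram_smith}) — yields $\bar{\Gamma}_5 = U_5\circ\Gamma_5$, which is again a universal invariant of degree $5$ because composition with a group isomorphism preserves universality. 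So the only remaining tasks are to identify the target group and to unwind the coordinate formula.

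For the target group: Proposition~\ref{prop_numerical_result} gives $G_5\cong\mathbb{Z}\oplus(\mathbb{Z}/2\mathbb{Z})^{6}\oplus\mathbb{Z}/4\mathbb{Z}$, hence by (\ref{eq:gn-decomposition}) $H_5\cong(\mathbb{Z}/2\mathbb{Z})^{6}\oplus\mathbb{Z}/4\mathbb{Z}$. Equivalently, exactly seven of the elementary divisors $d_1,\dots,d_{s_5}$ of $A$ differ from $1$, namely six copies of $2$ and one copy of $4$ (appearing last, since $d_{j-1}\mid d_j$). This is consistent with Proposition~\ref{prop_form_Hn}, which forces every $d_j$ to be a power of $2$ bounded by $2^{n-3}=4$ and which also justifies carrying out the Smith normal form computation modulo $2^{n-1}=16$. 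Consequently $\mathbb{Z}^{s_5}/\image S\cong\bigoplus_{d_j\ne 1}\mathbb{Z}/d_j\mathbb{Z}\cong(\mathbb{Z}/2\mathbb{Z})^{6}\oplus\mathbb{Z}/4\mathbb{Z}$, which is the target claimed in the proposition.

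For the formula: as recorded in Section~\ref{sec:simplified-form}, $\bar{\Gamma}_5(p) = U_5\bigl(\sum_i \langle w_{5,i},p\rangle\, w_{5,i}\bigr) = \sum_{i=1}^{s_5}\langle w_{5,i},p\rangle\, v_{5,i}$, where $v_{5,i}$ is the image of $U_5 e_i$ in $\mathbb{Z}^{s_5}/\image S$, i.e.\ its last $s_5-k+1$ entries read modulo the nontrivial divisors. The program output in Table~\ref{g5_non_zero_words} lists precisely those generators $w$ for which $v_{5}(w)\ne 0$, together with the corresponding vector; every other term in the sum vanishes. Hence the sum collapses to $\bar{\Gamma}_5(p) = \sum_{w\in W_5}\langle w,p\rangle\, v(w)$, which is the asserted formula, and universality of degree $5$ was established in the first paragraph.

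The main obstacle is not mathematical but computational: the statement is only as reliable as the Smith normal form and, crucially, the transformation matrix $U_5$ produced by the program, since Table~\ref{g5_non_zero_words} is assembled from the rows of $U_5$. Proposition~\ref{prop_form_Hn} removes one degree of freedom (it pins down the isomorphism type of $H_5$ once the multiplicities are known and licenses the modular arithmetic that controls coefficient growth), but it does not by itself certify the explicit vectors $v(w)$. I would validate these by consistency checks: applying the natural surjection $H_5\to H_4$ and the induced projection of target groups must recover $\bar{\Gamma}_4$, and in particular the value listed in the table for each rank-$4$ word must agree with the degree-$4$ invariant; moreover each $v(w)$ must be annihilated by $2^{\,6-\operatorname{rank}w}$, as predicted by (\ref{eq:two_power}). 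Writing the proof therefore amounts to invoking the framework above and documenting that the output of Section~\ref{sec:determination-group} is correct and was assembled into $\bar{\Gamma}_5$ as described.
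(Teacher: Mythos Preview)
Your proposal is correct and follows exactly the approach of the paper: the proposition is not given a separate proof in the paper but is presented as the $n=5$ instance of the general construction $\bar{\Gamma}_n = U_n\circ\Gamma_n$ developed in Section~\ref{sec:simplified-form}, with the target group read off from Proposition~\ref{prop_numerical_result} and the nonzero vectors $v(w)$ supplied by the computer output in Table~\ref{g5_non_zero_words}. Your additional remarks on validating the computation via the surjection to $H_4$ and the order constraint~(\ref{eq:two_power}) go beyond what the paper records but are sound consistency checks.
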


\begin{remark}
  The expression of $\Gamma_5$ is not unique.
  Indeed, the values $v(w)$ depends on the choice of an isomorphism
  $H_5 \cong (\mathbb{Z}/2\mathbb{Z})^6 \oplus \mathbb{Z}/4\mathbb{Z}$.
\end{remark}

\section{Homotopy classification of Gauss words of rank $4$ and $5$}
\label{sec:homotopy-classification}

As an application of the simplified form of the universal invariant,
we classify completely Gauss words of rank $4$.
We also apply the universal invariant of degree $6$ to Gauss words of rank $5$
and we obtain the classification with only one unclassified pair of two Gauss words.
We recall moves derived from H1, H2, and H3.

\begin{lemma}[Turaev \cite{Turaev2007}]
  The following pairs of words are in same homotopy class of words:
  \begin{itemize}
    \item[H4:] $xAByABz \leftrightarrow xyz$,
    \item[H5:] $xAByCAzBCt \leftrightarrow xBAyACzCBt$,
    \item[H6:] $xAByCAzCBt \leftrightarrow xBAyACzBCt$,
    \item[H7:] $xAByACzCBt \leftrightarrow xBAyCAzBCt$,
  \end{itemize}
  where $x$, $y$, $z$, and $t$ are arbitrary words that can be empty words.
\end{lemma}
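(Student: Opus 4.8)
The plan is to prove each of H4--H7 by producing an explicit finite sequence of the generating moves H1, H2, H3 (each allowed in either direction) together with isomorphisms, i.e.\ renamings of letters, joining the two words; one works inside a bounded window and treats the blocks $x,y,z,t$ as opaque. A preliminary reduction organizes the task. Since renaming is free we may pass freely between $xAByBAz$ and $xBAyABz$; and, more usefully, H4 is exactly H2 preceded by the single ``flip'' $xAByABz\leftrightarrow xAByBAz$, while each of H5, H6, H7 is H3 conjugated by a flip of one of its three adjacent pairs. For instance the left side of H5 differs from the left side of H3 only by the transposition $AC\leftrightarrow CA$ of its central pair, and its right side differs from the right side of H3 in the same way, so H5 reduces to the two flips $xAByCAzBCt\leftrightarrow xAByACzBCt$ and $xBAyCAzCBt\leftrightarrow xBAyACzCBt$; H6 and H7 are identical with the leading and trailing pairs. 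Thus the whole lemma is reduced to a short list of ``flip moves'', each swapping two adjacent occurrences of distinct letters in a fixed context.

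Each flip is then established in the classical way in which all variants of the third Reidemeister move are obtained from one fixed variant: introduce an auxiliary pair of cancelling crossings, slide the relevant strand across the obstruction by one or two applications of H3 (possibly after a renaming and possibly reading H3 right to left), and cancel the auxiliary crossings again. One point must be respected. A pair inserted by H1 consists of two \emph{adjacent} occurrences of a new letter, whereas in every instance of H3 the two occurrences of each participating letter are separated by at least one occurrence of another participating letter; hence an H1-pair can never itself take part in an H3 move. Consequently the auxiliary crossings must be introduced as a nested bigon via H2 read backwards (or one must use two new letters), after which a careful choice of which triple to feed to H3 --- typically one involving the new letters rather than the two original ones --- lets the flip go through, the auxiliary crossings being removed at the end with H1 and H2. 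A further economy is that word reversal and renaming preserve homotopy: reversing each of H1, H2, H3 returns a move of the same type after a renaming, so it suffices to treat one representative of each resulting class of flips.

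The real content is the search for these sequences, equivalently the verification that H1--H3 genuinely generate all of the listed variants. The search needs care: the most naive attempt --- insert one H1-bigon, apply H3 once, delete the bigon --- provably cannot work for the reason just given, and even with an H2-bigon a careless choice of triple merely reverses itself, so one must be willing to pass through intermediate words from which no immediate simplification is visible. A workable method is to retain, for the two or three letters in play, only their pairwise linking types (interleaved, nested, disjoint), observe that each of H1, H2, H3 acts transparently on this data, look for a short word in the moves realizing the required change of linking type, and only then write out the literal chain of words; once a candidate chain is in hand, checking that it begins and ends with the stated sides --- uniformly in the arbitrary $x,y,z,t$, which may even share letters with one another --- is routine bookkeeping. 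I expect this search, rather than any individual verification, to be the main obstacle.
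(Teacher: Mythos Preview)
The paper does not supply its own proof of this lemma: it is quoted from Turaev's paper and stated without argument, so there is no in-paper proof to compare your proposal against.

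As for your proposal itself, it is a plan rather than a proof, and you say so explicitly (``I expect this search, rather than any individual verification, to be the main obstacle''). The organizing idea is sound: H4 differs from H2, and each of H5--H7 differs from H3 (possibly read backwards), by transposing one adjacent pair of distinct letters, and your observation that an H1-pair can never serve as one of the three letters in an H3 move is correct and worth recording. But two points keep this from being a proof.

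First, the ``flip moves'' you isolate are context-dependent, not a single universal move: the transposition $CA\leftrightarrow AC$ you need for the left side of H5 takes place in the ambient pattern $xAByCAzBCt$, while the one for the right side sits in $xBAyCAzCBt$, and these are genuinely different configurations of the three chords. (They must be: an unrestricted adjacent transposition would make every Gauss word homotopically trivial, contradicting what the rest of the paper establishes.) So the reduction does not collapse the work to a single lemma; you still owe one explicit chain of H1--H3 moves per case, and each such chain is of roughly the same difficulty as a direct derivation of the corresponding H$k$.

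Second, and decisively, none of those chains is actually produced. The heuristic you propose---track only pairwise linking types, guess a short word in the moves, then verify---is reasonable, but until at least one representative chain is written out (say for H4, which is the shortest) there is nothing to check. As it stands the proposal correctly identifies the shape of the argument and the main pitfall, but the content of the lemma lies precisely in the sequences you have deferred.
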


Here, we give the classification of Gauss words of rank $4$ or less
by using homotopy moves H1 to H7 and $\bar{\Gamma}_5$.

\begin{theorem}
  Gauss words of rank $4$ or less are classified into the following four homotopy classes:
  \begin{itemize}
    \item $\{ ABACDCBD, ABCBDACD, ABCDCADB \}$,
    \item $\{ ABCACDBD, ABCADBDC, ABCDBDAC \}$,
    \item $\{ ABACBDCD \}$, and
    \item the set of other words.
  \end{itemize}
\end{theorem}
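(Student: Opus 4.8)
The plan is to combine a finite enumeration of Gauss words of rank at most $4$ with the invariant $\bar{\Gamma}_5$ to separate the classes, using the homotopy moves H1--H7 to collapse each class to a single representative. First I would list all isomorphism classes of Gauss words of rank $0,1,2,3,4$; by the formula $(2n-1)(2n-3)\cdots 3\cdot 1$ there are $1,1,3,15,105$ of them, though many contain a subword $AA$ and are killed by H1, so the effective list is much shorter. For each surviving word I would try to reduce it via H1 (remove an $AA$), H4 (remove an $ABAB$), H2, and the derived moves H5--H7 to one of the four candidate representatives $ABACDCBD$, $ABCACDBD$, $ABACBDCD$, or the empty word; this reduction is a bounded breadth-first search over a finite graph, so in principle it is purely mechanical.

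The second ingredient is showing these four classes are genuinely distinct. Here I would invoke Proposition \ref{prop:gamma5}: the universal invariant $\bar{\Gamma}_5$ takes values in $(\mathbb{Z}/2\mathbb{Z})^6 \oplus \mathbb{Z}/4\mathbb{Z}$, and from Table \ref{g5_non_zero_words} one reads off $\bar{\Gamma}_5(ABACDCBD) = (0,0,0,0,0,0,1)$, $\bar{\Gamma}_5(ABCACDBD) = (0,1,1,1,1,0,3)$ (via the entry $ABCACDBD$), and $\bar{\Gamma}_5(ABACBDCD) = (0,1,1,1,1,0,0)$, while $\bar{\Gamma}_5$ of the empty word is $0$. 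These four vectors are pairwise distinct, so no two of the four sets can coincide as homotopy classes. (One should double-check that $ABCACDBD$ indeed lies in the table in the row labeled $(0,1,1,1,1,0,3)$ and that $ABACBDCD$ lies in the row $(0,1,1,1,1,0,0)$; both do appear in Table \ref{g5_non_zero_words}.)

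The remaining verification is that the three three-element sets listed are in fact closed under homotopy, i.e., that the members of each set really are homotopic to one another. For the first set one exhibits explicit sequences of H1--H7 moves carrying $ABACDCBD \leftrightarrow ABCBDACD \leftrightarrow ABCDCADB$; similarly for the second set, and these three-term orbits should emerge naturally from the breadth-first reduction in the first step. It also remains to confirm that every rank-$\le 4$ word not homotopic to one of the first three representatives or to the empty word actually reduces to the empty word --- equivalently, that $\bar{\Gamma}_5$ vanishes on it and a reduction to the empty word exists; the "set of other words" is then exactly the class of the empty word.

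The main obstacle is the bookkeeping in the first step: one must be confident that the breadth-first search over rank-$\le 4$ words (allowing intermediate words of rank $5$, since H2, H5, H6, H7 can temporarily increase rank when read right-to-left) terminates and correctly partitions the words, rather than missing a reduction. Since the relevant state space is finite and small, this is not a deep difficulty, but it is where an error would most easily hide; the invariant $\bar{\Gamma}_5$ serves as the safeguard that prevents two distinct classes from being accidentally merged. I would therefore present the distinctness via $\bar{\Gamma}_5$ first, and then only need explicit move-sequences \emph{within} each of the two nontrivial three-element classes and a single reduction-to-empty argument for the catch-all class.
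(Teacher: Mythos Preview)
Your proposal is correct and follows essentially the same approach as the paper: reduce every rank-$\le 4$ word via the moves H1--H7 to one of the four representatives, then separate those four classes by evaluating $\bar{\Gamma}_5$ on each, obtaining exactly the four values $(0,0,0,0,0,0,1)$, $(0,1,1,1,1,0,3)$, $(0,1,1,1,1,0,0)$, and $0$ that the paper computes. (Minor slip: you refer to ``the three three-element sets'', but only the first two listed classes have three elements; the third is the singleton $\{ABACBDCD\}$.)
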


\begin{proof}
  By using the moves from H1 to H7,
  we can easily show that words of rank at most $4$
  except for the seven words exhibited in the list of the statement
  are homotopic to the empty word.
  For the first two sets in the list,
  the elements in each set are homotopic to each other via the moves H3, H5, H6, or H7.
  Applying the universal finite type invariant $\bar{\Gamma}_5$, we have
  \begin{align}
    \bar{\Gamma}_5(ABACDCBD) & = (0, 0, 0, 0, 0, 0, 1), \\
    \bar{\Gamma}_5(ABCACDBD) & = (0, 1, 1, 1, 1, 0, 3), \\
    \bar{\Gamma}_5(ABACBDCD) & = (0, 1, 1, 1, 1, 0, 0), \\
    \bar{\Gamma}_5(\emptyset) & = (0, 0, 0, 0, 0, 0, 0).
  \end{align}
  Therefore, we see that these four sets are distinct homotopy classes.
\end{proof}

\begin{table}
  \begin{tabular}{|llll|}
\hline
\multicolumn{4}{|l|}{$\{ \text{ABCDBEACED}, \text{ABCDECAEBD} \}$ and $\{ \text{ABCADBECDE}, \text{ABCACDEDBE} \}$} \\
\hline
ABCBDAEDCE & ABCDCAEBDE & & \\
\hline
ABCACDBEDE & & & \\
\hline
ABACDCEDBE & & & \\
\hline
ABACDCEBED & ABCDCAEDEB & ABCBDEDCAE & ABACDEDBCE \\
ABCDCEBDAE & & & \\
\hline
ABCBDCEAED & & & \\
\hline
ABCDECEADB & & & \\
\hline
ABCDCEDAEB & ABCDEBDACE & & \\
\hline
ABCDEBECAD & ABCDCEBEAD & ABCDCAEBED & \\
\hline
ABCADBDECE & ABCDBDAECE & & \\
\hline
ABACDCEBDE & ABACDEDBEC & & \\
\hline
ABCADCDEBE & & & \\
\hline
ABACBDCEDE & & & \\
\hline
ABACDBECED & ABACDBDECE & & \\
\hline
ABCDBEDEAC & ABCDEADBEC & & \\
\hline
ABCBDCEDEA & & & \\
\hline
ABACDBCEDE & ABCBDCAEDE & ABACDBEDEC & \\
\hline
ABCADCEBED & ABCADEBECD & & \\
\hline
ABCADEDCEB & & & \\
\hline
ABCBDCEADE & ABCBDEDAEC & ABCDACEDBE & ABCDAEBDEC \\
\hline
ABCDEDBECA & ABCBDAECDE & ABCBDEDCEA & ABCDCAEDBE \\
ABCDCEBDEA & & & \\
\hline
ABCDBECAED & ABCDBDEACE & ABCDEACEBD & \\
\hline
ABCDBEADEC & ABCDECADBE & ABCADEDBEC & \\
\hline
ABCDCADEBE & & & \\
\hline
ABCADECDBE & ABCADBEDEC & ABCDBECADE & ABCDBDEAEC \\
\hline
ABCADECEBD & ABCDBECEDA & ABCDECEBDA & ABCDBDECEA \\
ABCADBECED & & & \\
\hline
ABACDECEBD & & & \\
\hline
ABCDBDECAE & & & \\
\hline
ABCBDEAECD & ABCDEDACEB & ABCDCEAEDB & \\
\hline
ABACBDEDCE & & & \\
\hline
ABCDADECEB & ABCDAECEDB & ABCBDADECE & ABCDADBECE \\
ABCBDECEAD & & & \\
\hline
ABCBDAECED & & & \\
\hline
ABCBDAEDEC & & & \\
\hline
ABCDEDBEAC & & & \\
\hline
ABACDCBEDE & ABCBDACEDE & & \\
\hline
ABACDCBD & ABCBDACD & ABCDCADB & \\
\hline
ABCACDBD & ABCADBDC & ABCDBDAC & \\
\hline
ABACBDCD & & & \\
\hline
  \end{tabular}
  \caption{Partial classification of Gauss words of rank $5$ or less under the universal invariant $\bar{\Gamma}_6$.
  Words excluded from the table are homotopic to the empty word or words of rank at most $4$.
  The invariant $\bar{\Gamma}_6$ distinguishes words in a row from words in another row.
  The words in each row except the first row are homotopic to each other.
  The classification of the two sets in the first row remains unknown.}
  \label{g6_classified_non_zero_words}
\end{table}

The universal invariant $\bar{\Gamma}_6$ obtained from our computation
gives the following theorem, which shows
a partial classification of Gauss words of rank $5$ or less.
We omit the specific description of $\bar{\Gamma}_6$
because it consists of 2545 correspondences between words and integer vectors.

\begin{theorem}
  The universal finite type invariant $\bar{\Gamma}_6$ classifies
  Gauss words of rank $5$ or less as shown in Table \ref{g6_classified_non_zero_words}.
  This classification is complete except for the distinction between the two sets of words;
  the classification of the two sets $\{ w_1, w_2 \}$ and $\{ w_3, w_4 \}$ remains unknown, where
  \begin{align*}
    \begin{array}{ll}
      w_1 = ABCDBEACED, & w_2 = ABCDECAEBD, \\
      w_3 = ABCADBECDE, & w_4 = ABCACDEDBE,
    \end{array}
    \label{g4_non_zero_words}
  \end{align*}
  and $w_1$ is homotopic to $w_2$ and $w_3$ is homotopic to $w_4$.
\end{theorem}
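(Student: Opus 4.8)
The statement is essentially a large finite computation combined with a bookkeeping argument, so the proof breaks into three parts: (i) reduce all rank-$\le 5$ Gauss words to a small list of representatives using the homotopy moves H1--H7, (ii) compute $\bar\Gamma_6$ on those representatives and read off which ones it separates, and (iii) handle the single pair of two-element classes $\{w_1,w_2\}$ and $\{w_3,w_4\}$ for which $\bar\Gamma_6$ fails. The first part is the same kind of move-bashing that appears in the rank-$4$ theorem above, only larger: one starts from the $1\cdot 3\cdot 5\cdot 7\cdot 9 = 945$ isomorphism classes of rank-$5$ Gauss words, discards those of the form $xAAy$ (which die by H1) and those containing a subword pattern killed by H2/H4, and then repeatedly applies H3, H5, H6, H7 to collapse the survivors into the rows of Table~\ref{g6_classified_non_zero_words}. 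I would present this as: ``every rank-$\le 5$ word not appearing in the table is shown, by the moves H1--H7, to be homotopic either to the empty word or to a rank-$\le 4$ word,'' citing the rank-$4$ theorem for the latter, and ``within each row of the table the listed words are mutually homotopic via H3 and H5--H7.'' The move-sequences themselves are routine but numerous; in the write-up they can be relegated to a table or to the observation that a computer search over the finitely many moves performs this reduction.

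**Separation via $\bar\Gamma_6$.** Once the rows are fixed, I would invoke Proposition~\ref{prop_numerical_result} and the construction in Section~\ref{sec:simplified-form} to get the explicit map $\bar\Gamma_6 : \mathbb{Z}P \to (\mathbb{Z}/2\mathbb{Z})^{32}\oplus(\mathbb{Z}/4\mathbb{Z})^6\oplus\mathbb{Z}/8\mathbb{Z}$, which by Proposition~\ref{prop:gamma5}'s analogue is a universal invariant of degree $6$ and hence a genuine homotopy invariant. Evaluating $\bar\Gamma_6$ on one representative of each row --- a finite bracket count $\sum_{w\in W_6}\langle w,p\rangle v(w)$ --- produces distinct vectors for distinct rows, which proves that no two rows can be merged. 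Here the key structural point, already used in the rank-$4$ theorem, is that $\bar\Gamma_6$ being a homotopy invariant means equality of homotopy classes forces equality of invariant values; so distinctness of values is a genuine obstruction to homotopy. This step is a direct computation, and since the paper already asserts $\bar\Gamma_6$ consists of $2545$ word--vector correspondences, it is natural to say the values were obtained by the same computer program and simply tabulate the results for the relevant representatives.

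**The remaining pair --- the main obstacle.** The genuinely hard part is the last clause: showing that $w_1$ is homotopic to $w_2$ and $w_3$ to $w_4$, while leaving open whether $\{w_1,w_2\}$ and $\{w_3,w_4\}$ coincide. The positive assertions $w_1\sim w_2$ and $w_3\sim w_4$ require exhibiting explicit finite sequences of H1--H7 moves connecting them; I would search for these by hand (or by the same breadth-first move search used for the reduction) and display the sequences. The negative side is where the paper honestly stops: $\bar\Gamma_6$ assigns the \emph{same} vector to all four words, so it cannot separate the two classes, and no higher-degree invariant is available because the computation of $H_8$ was not feasible (as explained at the end of Section~\ref{sec:determination-group}). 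Thus the proof does not claim to resolve this pair --- the theorem's statement is deliberately ``complete except for'' these two classes --- and the write-up should make clear that the obstruction is computational: determining $H_8$ (hence $\bar\Gamma_7$) or finding a different invariant would be needed, and neither is carried out here. So the ``proof'' of the final clause is: verify the two homotopies by explicit moves, observe $\bar\Gamma_6$ takes a common value on all four, and note that this exhausts what the invariants at our disposal can say.
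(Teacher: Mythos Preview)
Your proposal is correct and matches the paper's implicit approach: the paper gives no explicit proof for this theorem, but the structure is the same as the rank-$4$ theorem immediately preceding it --- reduce via H1--H7, separate classes by evaluating the invariant, and report the one pair the invariant fails to distinguish.

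One small factual slip: you write ``determining $H_8$ (hence $\bar\Gamma_7$) \ldots\ would be needed.'' But $\bar\Gamma_n$ is built from $H_n$, not $H_{n+1}$; the paper \emph{did} compute $H_7$ (Proposition~\ref{prop_numerical_result}), it just did not construct $\bar\Gamma_7$ explicitly (see the introduction, which lists only degrees $4$, $5$, $6$). So the obstruction to resolving the remaining pair is not that $H_8$ is out of reach; it is simply that the invariants the authors actually wrote down stop at $\bar\Gamma_6$, and $\bar\Gamma_6$ does not separate the pair. Your remark here is peripheral to the proof of the theorem as stated, but you should correct it.
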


\section*{Acknowledgements}

The authors would like to thank Masamichi Kuroda for useful discussions.

\bibliographystyle{abbrv}
\bibliography{papers_en,books_en}

\end{document}